\title{Symmetric differentials and the fundamental group}
\author{Yohan Brunebarbe, Bruno Klingler, and Burt Totaro}
\date{  }
\def\Z{\text{\bf Z}}
\def\Q{\text{\bf Q}}
\def\R{\text{\bf R}}
\def\C{\text{\bf C}}
\def\P{\text{\bf P}}
\def\F{\text{\bf F}}
\def\arrow{\rightarrow}
\def\X{\widetilde{X}}
\def\End{\text{End}}
\def\Re{\text{\, Re\, }}
\def\rank{\text{rank}}
\def\FF{\overline{\F_p}}
\begin{document}
\maketitle
\newtheorem{theorem}{Theorem}[section]
\newtheorem{corollary}[theorem]{Corollary}
\newtheorem{lemma}[theorem]{Lemma}

\theoremstyle{definition}
\newtheorem{definition}[theorem]{Definition}
\newtheorem{example}[theorem]{Example}

\theoremstyle{remark}
\newtheorem{remark}[theorem]{Remark}

H\'el\`ene Esnault asked whether a smooth complex projective variety
$X$ with infinite fundamental group must have a nonzero
symmetric differential, meaning that $H^0(X,S^i\Omega^1_X)\neq 0$
for some $i>0$. This was prompted by the second author's work
\cite{Klingler}. In fact, Severi had wondered in 1949 
about possible relations between
symmetric differentials and the fundamental group
\cite[p.~36]{Severi}.
We know from Hodge theory that the cotangent bundle $\Omega^1_X$
has a nonzero section
if and only if the abelianization of $\pi_1X$ is infinite.
The geometric meaning of other symmetric differentials is
more mysterious, and it is intriguing that they may have such
a direct relation to the fundamental group.

In this paper we prove the following result
on Esnault's question, in the slightly broader setting
of compact K\"ahler manifolds.

\begin{theorem}
\label{main}
Let $X$ be a compact K\"ahler manifold. Suppose that
there is a finite-dimensional representation of $\pi_1X$ over some field
with infinite image. Then $X$ has a nonzero symmetric differential.
\end{theorem}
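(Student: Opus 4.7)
The plan is to apply non-abelian Hodge theory in order to convert the representation $\rho$ into a Higgs-theoretic object on $X$, and then to extract sections of $S^m \Omega^1_X$ from that object. First I would reduce to a representation $\rho \colon \pi_1 X \to GL_n(K)$ where $K$ is a local field, Archimedean or non-Archimedean, and the image is \emph{unbounded} in $GL_n(K)$. This rests on a standard fact: a finitely generated infinite subgroup of $GL_n$ over any field admits a place of the field of coefficients at which it becomes unbounded, for otherwise the product formula applied to matrix entries would force it to be finite.

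In the Archimedean case, Corlette's theorem produces a $\rho$-equivariant harmonic map from $\X$ to the symmetric space of $GL_n(K)$, and the Corlette-Simpson correspondence converts this into a Higgs bundle $(E,\theta)$ on $X$. Unboundedness of the image prevents the harmonic metric from being flat, so $\theta \neq 0$. In the non-Archimedean case, Gromov-Schoen provides a $\rho$-equivariant pluriharmonic map from $\X$ to the Bruhat-Tits building of $GL_n(K)$, and the work of Eyssidieux-Katzarkov-Pantev-Ramachandran extracts from this a spectral analogue on $X$ playing the role of a nonzero Higgs field. In both cases, we obtain a nonzero $\theta$ taking values in $\Omega^1_X$.

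The main obstacle is then to pass from $\theta \neq 0$ to the existence of a nonzero section of some $S^m \Omega^1_X$. When $\theta$ is generically non-nilpotent, the coefficients of its characteristic polynomial --- that is, the Hitchin map $\theta \mapsto (\text{tr}\,\theta, \text{tr}\,\theta^2, \ldots, \det \theta)$ --- give nonzero elements of $\bigoplus_{m=1}^{n} H^0(X, S^m \Omega^1_X)$, and we are done. When $\theta$ is everywhere nilpotent the Hitchin map vanishes, and one must work harder. Here I would flow $(E,\theta)$ along Simpson's $\C^*$-action on the moduli space of Higgs bundles to a $\C^*$-fixed point, producing a system of Hodge bundles $E = \bigoplus E^{p,q}$ underlying a complex variation of Hodge structure, still with nonzero Higgs field $\theta \colon E^{p,q} \to E^{p-1,q+1} \otimes \Omega^1_X$. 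Iterating gives maps $\theta^k \colon E^{p,q} \to E^{p-k,q+k} \otimes S^k \Omega^1_X$; taking $k$ maximal with $\theta^k \neq 0$ and exploiting positivity/negativity of the extremal Hodge bundles (in the spirit of Griffiths-Fujita-Kawamata and Viehweg-Zuo) to dualize, one extracts the desired section of $S^k \Omega^1_X$. The hardest ingredient is really this last ``nilpotent case'': the naive characteristic-polynomial recipe fails, and one must deform to a variation of Hodge structure and invoke Hodge-theoretic positivity.
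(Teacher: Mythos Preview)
Your outline tracks the paper closely in places --- the non-Archimedean reduction via Gromov--Schoen is exactly the paper's Theorem~\ref{unbounded} (Katzarkov--Zuo), and the ``$\theta$ non-nilpotent $\Rightarrow$ Hitchin map gives sections of $S^i\Omega^1_X$'' step is essentially Arapura's Theorem~\ref{nonrigid}. But the genuine gap is your nilpotent case, which is precisely the heart of the paper. Having a nonzero $\theta^k \colon E^{p,q} \to E^{p-k,q+k}\otimes S^k\Omega^1_X$ gives a section of $(E^{p,q})^*\otimes E^{p-k,q+k}\otimes S^k\Omega^1_X$, not of $S^k\Omega^1_X$; the ``positivity/negativity of the extremal Hodge bundles'' you invoke (Griffiths, Zuo) yields only \emph{weak positivity} of $\Omega^1_X$ --- pseudoeffectivity, in line-bundle language --- and produces no global sections at all. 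Upgrading weak positivity to actual sections is the new content of the paper: a curvature computation (Theorem~\ref{curv}, Lemma~\ref{curvbound}) shows that the image of the period map has \emph{big} cotangent bundle, which then feeds back to $X$.

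There is also a structural point you miss. To run that curvature argument one needs the period map to descend to $X$, i.e.\ the monodromy must be discrete; a rigid complex VHS need not have discrete monodromy. The paper therefore inserts an extra step you omit: by Simpson, a rigid VHS is a direct factor of a $\Q$-VHS $\tau$; then for each prime $p$ one checks $\tau$ over $\Q_p$ --- if unbounded, Theorem~\ref{unbounded} finishes; if bounded for all $p$, then $\tau$ lands in $GL(m,\Z)$ and the monodromy is discrete, so Corollary~\ref{vhs} applies. (Relatedly, your ``standard fact'' in the first paragraph is doing work that the paper organizes differently: rather than a single unboundedness dichotomy, it splits into non-rigid vs.\ rigid in characteristic zero, and uses a moduli-space dimension argument in characteristic $p$.) So your sketch is right that the nilpotent/VHS case is where the difficulty lies, but it underestimates what is required there.
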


All known varieties with infinite fundamental
group have a finite-dimensional complex representation with infinite image,
and so the theorem applies to them. Depending on what we know
about the representation, the proof
gives more precise lower bounds on the ring
of symmetric differentials.

\begin{remark}
(1) One reason to be interested in symmetric differentials
is that they have implications toward Kobayashi
hyperbolicity. At one extreme, if $\Omega^1_X$ is ample,
then $X$ is Kobayashi hyperbolic \cite[Theorem 3.6.21]{Kobayashispaces}.
(Equivalently, every holomorphic map $\C\arrow X$ is constant.)
If $X$ is a surface
of general type with $c_1^2>c_2$, then Bogomolov showed that
$\Omega^1_X$ is big and deduced that $X$ contains only finitely
many rational or elliptic curves, something which remains open
for arbitrary surfaces of general type \cite{Bogomolov, Deschamps}.
For any $\alpha\in H^0(X,S^i\Omega^1_X)$ with $i>0$,
the restriction of $\alpha$ to any rational curve in $X$ must be zero
(because $\Omega^1_{\P^1}$ is a line bundle of negative degree),
and so any symmetric differential
gives a first-order algebraic differential equation
satisfied by all rational curves in $X$.

A lot is already known about Kobayashi hyperbolicity in the situation
of Theorem \ref{main}. In particular, Yamanoi showed that for any smooth
complex projective variety $X$ such that $\pi_1X$
has a finite-dimensional complex representation whose image
is not virtually abelian,
the Zariski closure of any holomorphic map
$\C\arrow X$ is a proper subset of $X$ \cite{Yamanoi}.

(2) Arapura used Simpson's theory of representations
of the fundamental group to show that if $\pi_1X$ has a non-rigid
complex representation, then $X$ has a nonzero symmetric differential
\cite[Proposition 2.4]{Arapura}, which we state as Theorem
\ref{nonrigid}.

Thus the difficulty for Theorem \ref{main}
is how to use a rigid representation of the fundamental
group. The heart of the proof 
is a strengthening of Griffiths
and Zuo's results on variations of Hodge structure \cite{Griffithscurv,
Zuoneg}, from
weak positivity of the cotangent bundle (analogous to ``pseudoeffectivity''
in the case of line bundles) to bigness. As a result, we get many
symmetric differentials on the base of a variation of Hodge
structure.

The strengthening involves two ingredients. First, a new curvature
calculation shows that if a compact Kahler manifold $X$
has nonpositive holomorphic bisectional curvature,
and the holomorphic sectional curvature is negative
at one point, then the cotangent bundle of $X$ is nef and big
(Theorem \ref{curv}). Next, we have to relate any variation
of Hodge structure to one with discrete monodromy group.
For that, we use results of Katzarkov and Zuo which
analyze $p$-adic representations of the fundamental group
by harmonic map techniques. This reduction to the case
of discrete monodromy group is very much in the spirit
of the arguments by Eyssidieux, Katzarkov, and others
which prove the Shafarevich conjecture for linear groups
\cite{Eyssidieux, EKPR, Katzarkov}.

(3) The abundance conjecture in minimal model theory
would imply that a smooth complex projective variety $X$ is
rationally connected if and only if $H^0(X,(\Omega^1_X)^{\otimes i})=0$
for all $i>0$ \cite[Corollary 1.7]{GHS}.
Without abundance, Campana used Gromov's
$L^2$ arguments \cite{Gromov} on the universal cover to show
that if $X$ is not simply connected, then
$H^0(X,(\Omega^1_X)^{\otimes i})\neq 0$ for some $i>0$
\cite[Corollary 5.1]{Campana}.
But this conclusion is weaker than that of Theorem \ref{main}.
In particular,
finding a section of a general tensor bundle $(\Omega^1_X)^{\otimes i}$
has no direct implication towards Kobayashi hyperbolicity.
(There are more subtle implications, however.
Demailly has shown that every smooth projective variety $X$
of general type has some algebraic differential
equations, typically not first-order,
which are satisfied by all holomorphic maps
$\C\arrow X$ \cite[Theorem 0.5]{DemaillyMorse}.)

There are many varieties $X$ of general type (which have many sections
of the line bundles $K_X^{\otimes j}$, hence of the bundles
$(\Omega^1_X)^{\otimes i}$) which have no symmetric differentials.
For example, Schneider showed that a smooth subvariety $X\subset \P^N$
with $\dim(X)> N/2$ has no symmetric differentials \cite{Schneider}. Most
such varieties are of general type.

(4) The possible implication from infinite fundamental
group to existence of symmetric differentials cannot be reversed.
In fact, Bogomolov constructed smooth complex projective
varieties which are simply connected but have ample cotangent bundle
\cite[Proposition 26]{Debarreample}. Brotbek recently gave a simpler
example of a simply connected variety with ample cotangent bundle:
a general complete intersection surface of high multidegree in $\P^N$
for $N\geq 4$ has ample cotangent bundle \cite[Corollary 4.8]{Brotbek}.
The ring of symmetric differentials on such a variety is as big
as possible, roughly speaking. 

(5) Theorem \ref{main} makes it natural to ask whether the fundamental
group of a smooth
complex projective variety $X$, if infinite,
must have a finite-dimensional complex representation
with infinite image. This is not known. We know by Toledo that the
fundamental group of a smooth projective variety need not be
residually finite \cite{Toledo}, in particular need not be linear.

Even if the fundamental group of a smooth projective variety $X$
is infinite and residually finite, it is not known whether $\pi_1X$
always has a finite-dimensional
complex representation with infinite image. Indeed, there are
infinite, residually finite, finitely presented groups $\Gamma$ such that
every finite-dimensional complex representation of $\Gamma$ has finite
image. Such a group can be constructed as follows; can it be the
fundamental group of a smooth complex projective variety? Let $K$ be a global
field of prime characteristic $p$ (for example $K = \F_p(T)$).
Let $S$ be a finite set of primes
of $K$ and $O_S$ the subring of $S$-integers of $K$. Then
$\Gamma:= SL(n,O_S)$ is finitely
presented if $n\geq 3$ and $|S| > 1$ \cite{Splitthoff},
and any finite-dimensional complex representation of $\Gamma$
has finite image if $n\geq 3$ and $|S| > 0$
\cite[Theorem 3.8(c)]{Margulis}. Also, $\Gamma$ is residually finite.

(6) One cannot strengthen Theorem \ref{main} to say that every
non-simply-connected variety has a nonzero symmetric differential.
For example, Kobayashi showed that every
smooth complex projective variety $X$
with torsion first Chern class
and finite fundamental group has $H^0(X,S^i\Omega^1_X)=0$
for all $i>0$ \cite{Kobayashitensor}. This applies to Enriques surfaces,
which have fundamental group $\Z/2$.
\end{remark}

Acknowledgements: It is a pleasure to thank H.~Esnault,
who suggested the question 
leading to Theorem \ref{main} after a lecture by the second author on
\cite{Klingler}; F.~Campana for explaining \cite{Campana};
and I.~Dolgachev, C.~Haesemeyer, L.~Katzarkov,
and K.~Zuo for useful questions.

\vspace{0.2cm}
Convention: Throughout the paper, varieties and
manifolds are understood to be connected.

\section{Negatively curved varieties}
\label{curvsect}

\begin{theorem}
\label{curv}
Let $X$ be a compact K\"ahler manifold
with nonpositive holomorphic bisectional curvature.
Suppose that the holomorphic sectional curvature is negative
at one point of $X$.
Then the cotangent bundle of $X$ is nef and big.
\end{theorem}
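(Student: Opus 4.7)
The plan is to recast the statement in terms of the projectivized tangent bundle. Let $\pi: Y = \P(T_X) \to X$, equipped with the tautological line bundle $\mathcal{O}_Y(1)$, whose powers satisfy $H^0(Y, \mathcal{O}_Y(k)) = H^0(X, S^k \Omega^1_X)$. By definition, $\Omega^1_X$ is nef (respectively big) precisely when $\mathcal{O}_Y(1)$ is, so it suffices to prove both properties for this line bundle. I would equip $\mathcal{O}_Y(-1) \hookrightarrow \pi^* T_X$ with the subbundle metric inherited from the K\"ahler metric on $T_X$, and $\mathcal{O}_Y(1)$ with the dual metric.

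For nefness, the strategy is a direct curvature computation. At a point $(x, [\xi])$ with $\xi \in T_xX$ a unit vector, choose K\"ahler normal coordinates at $x$ and a unitary frame adapted to $\xi$. In the resulting trivialization, the Chern curvature of $\mathcal{O}_Y(1)$ decomposes as $\Theta = \Theta_h + \Theta_v$, where $\Theta_v$ is a Fubini--Study form on the fiber $\P(T_xX)$ (pointwise strictly positive) and $\Theta_h$ is the horizontal Hermitian form on $T_xX$ sending $v$ to $-R(\xi, \bar\xi, v, \bar v)$. Nonpositive bisectional curvature yields $\Theta_h \geq 0$, hence $\Theta \geq 0$ on all of $Y$, so $\mathcal{O}_Y(1)$ is nef.

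For bigness, since $\mathcal{O}_Y(1)$ is now nef, it suffices to show $\int_Y c_1(\mathcal{O}_Y(1))^{\dim Y} > 0$, and this integral equals $\int_Y \Theta^{2n-1}$ using the smooth semipositive representative, where $n = \dim X$. Only one term in the binomial expansion of $(\Theta_h + \Theta_v)^{2n-1}$ survives by dimension, namely $\binom{2n-1}{n}\Theta_h^n \wedge \Theta_v^{n-1}$, a pointwise nonnegative smooth top-form on $Y$. So the integral is positive as soon as one exhibits a single point $(x_0, [\xi]) \in Y$ at which $\Theta_h$ is strictly positive definite, meaning $R(\xi, \bar\xi, v, \bar v) < 0$ for every nonzero $v \in T_{x_0}X$.

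The main obstacle is exactly this pointwise step. Negative holomorphic sectional curvature at $x_0$ only forces $R(\xi, \bar\xi, \xi, \bar\xi) < 0$ for each $\xi$, and does not a priori preclude a direction $v$ transverse to $\xi$ along which the bisectional form vanishes. The plan to close the gap is to study the continuous nonnegative function $\Psi(\xi) = \det(-\Theta_h(\xi))$ on the unit sphere in $T_{x_0}X$ and rule out $\Psi \equiv 0$: if $\Psi$ vanished identically then every $\Theta_h(\xi)$ would admit a nonzero null vector $v_\xi$, and polarizing the bisectional form at a $\xi$ which is critical for the strictly negative function $\xi \mapsto R(\xi, \bar\xi, \xi, \bar\xi)$ would, by first-order variation along $\xi + t v_\xi$, contradict this strict negativity. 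This gives a $\xi$ with $\Psi(\xi) > 0$, hence $\Theta > 0$ at $(x_0, [\xi])$, and therefore $\int_Y \Theta^{2n-1} > 0$.
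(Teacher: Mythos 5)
Your overall architecture is the same as the paper's: nefness from Griffiths's curvature formula for $O(1)$ on the projectivized bundle, and bigness by showing that the semipositive form $(c_1O(1))^{2n-1}$ has positive integral; your surviving binomial term $\binom{2n-1}{n}\Theta_h^n\wedge\Theta_v^{n-1}$ is exactly the fiber-integration statement (the Segr\'e form $s_n(TX)$, via Guler) that the paper uses, and everything reduces, as in the paper, to producing one point $x_0$ and one direction $\xi$ with $B(\xi,v)<0$ for every $v\neq 0$. The gap is in precisely that last step, and it is a genuine one. At a critical point $\xi$ of the holomorphic sectional curvature $H$ on $\P(T_{x_0}X)$, the first-order variation of $H$ along $\xi+cv$ yields only the identity $\Theta(\xi,\xi,\xi,v)=H(\xi)\langle\xi,v\rangle$. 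If $v$ is a null vector of the positive semidefinite Hermitian form $(v,w)\mapsto -\Theta(\xi,\xi,v,w)$, then by Cauchy--Schwarz $\Theta(\xi,\xi,v,\xi)=0$, and since $\Theta(\xi,\xi,\xi,v)=\overline{\Theta(\xi,\xi,v,\xi)}$ the two facts combine to give merely $\langle\xi,v\rangle=0$. That is not a contradiction with $H(\xi)<0$: it only says the degenerate direction is orthogonal to $\xi$, which is exactly what happens on a product $C_1\times C_2$ of curves of genus at least $2$ when $\xi$ is tangent to one factor. So first-order information cannot rule out your $\Psi\equiv 0$ scenario.

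What is needed, and what the paper's Lemma \ref{curvbound} supplies, is the second-order variation at a $\xi$ that \emph{maximizes} $H$ (not merely a critical point; at a minimum the inequality would point the wrong way). Expanding $H(\xi+cv)$ to order $|c|^2$ and choosing the phase of $c$ so that $\Re(\overline{c}^2\Theta(\xi,v,\xi,v))=0$, the condition that the quadratic term be $\leq 0$ at the maximum gives
$$-2H(\xi)|c|^2+4B(\xi,v)|c|^2\leq 0,$$
hence $B(\xi,v)\leq H(\xi)/2<0$ for every $v\neq 0$, which is the strict definiteness of $\Theta_h$ you need (and gives the sharp constant $-A/2$, attained on the complex ball). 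With this lemma inserted in place of your first-order argument, the rest of your proof goes through and coincides with the paper's.
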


For a vector bundle $E$ on a compact complex manifold,
write $P(E)$ for the projective bundle of hyperplanes in $E$.
We define a vector bundle $E$ to be {\it ample, nef}, or {\it big }if
the line bundle $O(1)$ on $P(E)$ has the corresponding property
\cite[Definition 6.1.1, Example 6.1.23]{Lazarsfeld}; see
Demailly \cite[Definition 6.3]{DemaillyL2} for the definition
of a nef line bundle on a compact complex manifold. It follows that
$E$ is big if and only if there are $c>0$ and $j_0\geq 0$ such that
$$h^0(X,S^jE)\geq cj^{\dim(X)+\rank(E)-1}$$
for all $j\geq j_0$. Note that Viehweg and Zuo use ``big'' for
a stronger property of vector bundles, as discussed below
in Remark (2).

We give the definition of holomorphic bisectional
curvature in the proof of Lemma \ref{curvbound}. On a K\"ahler
manifold, the holomorphic bisectional curvature $B(x,y)$ for
tangent vectors $x$ and $y$ is a positive linear combination
of the (Riemannian) sectional curvatures of the real 2-planes
$\R\{x,y\}$ and $\R\{x,iy\}$
\cite[section 1]{GK}. Holomorphic sectional curvature is a special
case of holomorphic bisectional curvature; it is also equal
to the sectional curvature of a complex line in the tangent space,
viewed as a real 2-plane. It follows that a K\"ahler manifold
with negative or nonpositive
sectional curvature satisfies the corresponding inequality for holomorphic
bisectional curvature, and that in turn implies the corresponding
inequality for holomorphic sectional curvature.

For example, Theorem \ref{curv} applies to the quotient of any
bounded symmetric domain by a torsion-free cocompact lattice,
or to any smooth subvariety of such a quotient. (This uses
that holomorphic bisectional curvature and holomorphic
sectional curvature decrease on complex submanifolds
\cite[section 4]{GK}.) Thus
we have a large class of smooth projective varieties
with a lot of symmetric differentials. Theorem \ref{curv} seems
to be new even for these heavily studied varieties. For a quotient
$X$ of a symmetric domain of ``tube type'', it was known
that, after passing to some finite covering,
$S^n\Omega^1_X$ contains the ample line bundle $K_X=\Omega^n_X$,
and so $X$ has some symmetric differentials \cite[section 4.2]{CDS}. But
that argument does not show that $\Omega^1_X$ is big.

\begin{remark}
(1) If the holomorphic {\it bisectional }curvature of 
a compact K\"ahler manifold is {\it negative},
then the cotangent bundle is ample. But
the cotangent bundle of $X$ need not be ample under
the assumptions of Theorem \ref{curv}, even if the holomorphic
sectional curvature is everywhere negative.
A simple
example is the product $C_1\times C_2$ of two curves of genus at least 2,
for which the natural product metric has negative holomorphic sectional
curvature and nonpositive holomorphic bisectional curvature.
The cotangent bundle is $\pi_1^*\Omega^1_{C_1}\oplus \pi_2^*\Omega^1_{C_2}$,
which is not ample because its restriction to a curve $C_1\times p$
has a trivial summand. A more striking example is the quotient $X$
of the product of two copies of the unit disc
by an irreducible torsion-free cocompact lattice. (Some surfaces
of this type are known as quaternionic
Shimura surfaces.)
The curvature conditions of Theorem \ref{curv}
are again satisfied at every point. In this case,
Shepherd-Barron showed that the algebra of symmetric differentials
$\oplus_{i\geq 0} H^0(X,S^i\Omega^1_X)$ is not finitely generated
\cite{SB}. It follows that the cotangent bundle of $X$ is nef and big
but not semi-ample \cite[Example 2.1.30]{Lazarsfeld}, let alone ample.

(2) The cotangent bundle need not be big in Viehweg's stronger sense
under the assumptions of Theorem \ref{curv}. To give the definition,
let $X$ be a projective variety over a field with an ample line bundle $L$.
A vector bundle $E$ is {\it weakly positive }if there is a nonempty
open subset $U$ such that for every $a>0$ there is a $b>0$ such
that the sections of $S^{ab}(E)\otimes L^{\otimes b}$ over $X$ span that
bundle over $U$. A bundle $E$ is {\it Viehweg big }if there is
a $c>0$ such that $S^c(E)\otimes L^{-1}$ is weakly
positive. The assumptions of Theorem \ref{curv} do not imply
that $\Omega^1_X$ is Viehweg big, as shown again by $X$ the product
of two curves of genus at least 2 \cite[Example 1.8]{Jabbusch}.

(3) Following Sakai, we define the {\it cotangent dimension }$\lambda(X)$
of a compact complex $n$-fold $X$ to be the smallest number $\lambda$
such that there is a positive constant $C$ with 
$\sum_{i=0}^jh^0(X,S^i\Omega^1_X)\leq Cj^{\lambda+n}$ for all $j\geq 0$
\cite{Sakai}. Then
$\lambda(X)$ is an integer between $-n$ and $n$, and
$\Omega^1_X$ is big if and only if $\lambda(X)$ has the maximum value, $n$.
\end{remark}

\begin{proof} (Theorem \ref{curv})
Let $\P(\Omega^1_X)\arrow X$ be the bundle of hyperplanes in
the cotangent bundle $\Omega^1_X$.
Since $X$ has nonpositive bisectional curvature, the associated
metric on the line bundle
$O(1)$ on $\P(\Omega^1_X)$ has nonnegative curvature
\cite[2.36]{Griffithspos}. It follows that $\Omega^1_X$ is nef.

The hard part is to show that $\Omega^1_X$ is big. Equivalently,
we have to show that the line bundle $O(1)$ on $\P(\Omega^1_X)$ is big.
By Siu, this holds if the differential form $(c_1O(1))^{2n-1}$,
which we know is nonnegative,
is positive at some point of the compact complex
manifold $\P(\Omega^1_X)$ \cite{Siu}.
The pushforward of the cohomology
class $(c_1O(1))^{2n-1}$ to $X$ is the Segr\'e class $s_n(TX)$.
In fact, this is true at the level of differential forms,
by Guler \cite{Guler}.
(The total Segr\'e class of a vector bundle $E$ is defined as the inverse
of the total Chern class, $s(E)=c(E)^{-1}$. For example, $s_1(E)=-c_1(E)$
and $s_2(E)=(c_1^2-c_2)(E)$.)
So we want to show that the Segr\'e number $\int_X s_n(TX)$
is positive (rather than zero).

\begin{lemma}
\label{segre}
Let $E$ be a holomorphic hermitian vector bundle of rank $n$ on a complex
manifold $X$ of dimension at least $n$. Let $p$ be a point in $X$.
Suppose that the curvature $\Theta_E$ in
$A^{1,1}(\End(E))$ is nonnegative at $p$, and that there
is a nonzero vector $e$ in $E_p$ such that the $(1,1)$-form
$\Theta_E(e,e)$ at $p$ is positive. Then the Segr\'e form
$s_n(E^*)$ is positive at $p$.
\end{lemma}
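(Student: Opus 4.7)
The plan is to realize $s_n(E^*)$ as a fiber integral over the projective bundle $\pi\colon \P(E)\arrow X$ of hyperplanes in $E$, compute the integrand explicitly in terms of the Griffiths curvature of $E$, and use continuity of the strict positivity hypothesis at $e$ to show the resulting integral is strictly positive.

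First I would invoke the identity at the level of differential forms,
\[
s_n(E^*)=\pi_*\!\left(c_1(O(1))^{2n-1}\right),
\]
on the projective bundle $\pi\colon \P(E)\arrow X$ of hyperplanes in $E$, and use the classical curvature computation for $O(1)$ of Griffiths \cite{Griffithspos}. In normal coordinates at a point $y\in\pi^{-1}(p)$ corresponding, via the hermitian metric on $E$, to a unit vector $v\in E_p$, the Chern form $\xi:=c_1(O(1))$ splits at $y$ into a purely horizontal and a purely vertical part,
\[
\xi|_y=\Theta_E(v,v)+\omega_{FS,y},
\]
where the first summand is the pullback from $X$ of the Griffiths $(1,1)$-form $\Theta_E(v,v)$ on $T_pX$, and the second is the Fubini-Study form on the fiber $\pi^{-1}(p)\cong\P^{n-1}$ induced by the hermitian form on $E_p$.

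Next, expand $\xi^{2n-1}$ by the binomial theorem. Terms with vertical degree greater than $n-1$ vanish (the fiber has complex dimension $n-1$), and terms with smaller vertical degree push forward to zero; only the contribution with vertical degree exactly $n-1$ survives, yielding
\[
s_n(E^*)|_p=\binom{2n-1}{n}\int_{[v]\in\P(E_p)}\Theta_E(v,v)^n\,d\mu_{FS}([v]),
\]
an $(n,n)$-form on $T_pX$, where $d\mu_{FS}$ denotes the Fubini-Study volume form on $\P(E_p)\cong\P^{n-1}$ and $v$ ranges over unit vectors.

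For the positivity step, Griffiths-semipositivity of $\Theta_E$ at $p$ implies that $\Theta_E(v,v)$ is a semipositive hermitian $(1,1)$-form on $T_pX$ for every nonzero $v\in E_p$; its $n$-th wedge power $\Theta_E(v,v)^n$ is therefore a strongly positive $(n,n)$-form on $T_pX$, and for any complex $n$-plane $W\subset T_pX$ it restricts to a nonnegative multiple of the canonical volume form on $W$. By the hypothesis on $e$, $\Theta_E(e,e)$ is positive definite on $T_pX$, hence so is $\Theta_E(v,v)$ for $[v]$ in an open neighborhood $U\subset\P(E_p)$ of $[e]$, by continuity; on $U$, $\Theta_E(v,v)^n$ restricts to a strictly positive volume form on every $W$. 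The integral of a nonnegative integrand that is strictly positive on a set of positive Fubini-Study measure is strictly positive, so $s_n(E^*)|_p$ is a strongly positive $(n,n)$-form that restricts to a strictly positive volume form on every complex $n$-plane $W\subset T_pX$. The main conceptual point is that pointwise strict positivity at the single direction $[e]\in\P(E_p)$ is enough: semipositivity everywhere makes the integrand nonnegative so no cancellation occurs, while continuity spreads the strict positivity from $[e]$ to a set of positive measure. The curvature decomposition and bidegree bookkeeping are standard.
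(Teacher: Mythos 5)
Your proposal is correct and follows essentially the same route as the paper: both realize $s_n(E^*)$ as the fiber integral of $c_1(O(1))^{2n-1}$ over $\P(E)$, invoke Griffiths's horizontal/vertical decomposition of the curvature of $O(1)$, and conclude positivity from nonnegativity everywhere plus strict positivity near the point $[e]$ of the fiber. The only difference is that you spell out the binomial expansion and the bidegree bookkeeping that the paper leaves implicit, which is a harmless (and arguably helpful) elaboration.
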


\begin{proof}
Since $E$ has nonnegative curvature, the associated
metric on the line bundle
$O(1)$ on $\P(E)$ has nonnegative curvature form
$c_1O(1)$. The Segr\'e form $s_n(E^*)$
is the integral along the fibers of the differential form
$(c_1O(1))^{2n-1}$. So the Segr\'e form is positive at
the point $p$ if the $(1,1)$-form $c_1O(1)$ is positive
at least at one point $e$ of the fiber over $p$. 
Griffiths's formula
for the curvature of $O(1)$ \cite[2.36]{Griffithspos} is:
$$c_1O(1)(y,y)=\frac{\Theta_E(e,e,y_h,y_h)}{|e|^2} + \omega(y_v,y_v),$$
where $y$ is a tangent vector in $\P(E)$ with horizontal
and vertical parts $y_h$ and $y_v$, and $\omega$ is a positive
$(1,1)$ form on the projective space $\P(E_p)$.
So the form $c_1O(1)$ is positive at a point
in $\P(E)$ if the corresponding vector $e$ in $E_p^*$ (defined
up to $\C^*$)
satisfies $\Theta_E(e,e,v,v)>0$ for all $v\neq 0$
in $T_pX$. This is exactly our assumption.
\end{proof}

The theorem is now a consequence of the following geometric lemma.
\end{proof}

\begin{lemma}
\label{curvbound}
Let $X$ be a K\"ahler manifold. Suppose that
at a point $p$, the holomorphic sectional curvature of $X$
is at most a negative constant
$-A$.
Then there is a nonzero vector
$x$ in $T_pX$ such that the holomorphic bisectional curvature
$B(x,y)$ is at most $-A/2$ for all nonzero vectors
$y$ in $T_pX$.
\end{lemma}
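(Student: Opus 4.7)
The plan is to take $x_0 \in T_pX$ to be a unit vector that maximizes the holomorphic sectional curvature $H(v) := R(v,\bar v, v, \bar v)/|v|^4$ on the unit sphere of $T_pX$, and to read off the bound from the first and second variations of $H$ at $x_0$. Since $H$ is invariant under scaling by $\C^*$, it descends to a continuous function on the compact projective space $\P T_pX$, so the maximum is attained and $H(x_0)\leq -A$ by hypothesis. Throughout I use the standard Kähler symmetries $R(a,\bar b,c,\bar d) = R(c,\bar b,a,\bar d) = R(a,\bar d,c,\bar b)$ and the reality identity $\overline{R(a,\bar b,c,\bar d)} = R(b,\bar a,d,\bar c)$.

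The key computation is the complex Hessian of $H$ at $[x_0]$ taken along variations $[x_0 + ty]$, $t\in\C$, with $y$ Hermitian-orthogonal to $x_0$. Expanding $R(x_0+ty,\overline{x_0+ty},x_0+ty,\overline{x_0+ty})/|x_0+ty|^4$ and collecting terms yields $\partial_t H|_{t=0} = 2R(y,\bar x_0, x_0, \bar x_0)$ and $\partial_t \partial_{\bar t} H|_{t=0} = 4R(x_0, \bar x_0, y, \bar y) - 2H(x_0)|y|^2$. Vanishing of the first variation at the critical point $[x_0]$ gives $R(y,\bar x_0, x_0, \bar x_0) = 0$ for every $y\perp x_0$. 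Nonpositivity of the real Laplacian $\Delta_\R = 4\partial_t\partial_{\bar t}$ at a maximum then gives the central inequality $R(x_0,\bar x_0, y, \bar y) \leq \tfrac12 H(x_0)|y|^2$, equivalently $B(x_0, y) \leq H(x_0)/2 \leq -A/2$, for every $y \perp x_0$.

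For arbitrary nonzero $y$, decompose $y = \alpha x_0 + y'$ with $y'\perp x_0$ and expand $R(x_0,\bar x_0, y, \bar y) = |\alpha|^2 H(x_0) + \alpha R(x_0,\bar x_0, x_0, \bar y') + \bar\alpha R(x_0,\bar x_0, y', \bar x_0) + R(x_0,\bar x_0, y', \bar y')$. Both cross terms vanish at the critical point: $R(x_0,\bar x_0, y', \bar x_0) = R(y',\bar x_0, x_0, \bar x_0) = 0$ directly by Kähler symmetry and the first-variation identity, and $R(x_0,\bar x_0, x_0, \bar y') = R(x_0,\bar y', x_0, \bar x_0) = \overline{R(y',\bar x_0, x_0, \bar x_0)} = 0$ by the reality identity. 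Combining with the second-variation bound and the elementary estimate $|\alpha|^2 + |y'|^2/2 \geq (|\alpha|^2 + |y'|^2)/2 = |y|^2/2$ (using $H(x_0)<0$) gives $R(x_0,\bar x_0, y, \bar y) \leq H(x_0)(|\alpha|^2 + |y'|^2/2) \leq \tfrac12 H(x_0)|y|^2 \leq -\tfrac{A}{2}|y|^2$, hence $B(x_0, y) \leq -A/2$ as required. There is no genuine obstacle in this argument; the only subtlety is organizing the Kähler symmetries carefully so that the critical-point identity actually kills both cross terms.
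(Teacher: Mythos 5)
Your proof is correct and follows essentially the same route as the paper: both take a unit vector maximizing the holomorphic sectional curvature and extract the bound $B(x_0,y)\leq H(x_0)/2$ from the first and second variations of $H$ along $x_0+ty$. The only differences are organizational --- you kill the $t^2$ and $\bar{t}^2$ terms by taking the complex Laplacian $\partial_t\partial_{\bar{t}}$ rather than by choosing $t$ on a real line where $\Re(\bar{t}^2\Theta(x_0,y,x_0,y))=0$, and you reduce a general $y$ to the orthogonal case via the decomposition $y=\alpha x_0+y'$ rather than carrying the $\langle x_0,y\rangle$ terms through the expansion as the paper does.
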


It seems surprising that the lemma holds without assuming
that $X$ has nonpositive holomorphic bisectional curvature.
(That is true in our application, however.) The bound $-A/2$
is optimal, as shown by the invariant metric on the complex unit $n$-ball
for $n\geq 2$: if we scale the metric to have holomorphic
sectional curvature equal to $-A$, then the holomorphic bisectional
curvature $B(x,y)$ varies between $-A$ (when $x$ and $y$ span
the same complex line) and $-A/2$ (when the hermitian inner
product $\langle x,y\rangle$ is zero).

In the special case where $X$ is a bounded symmetric domain,
Mok studied in detail
the geometry of the tangent vectors $x$ such that the holomorphic
bisectional curvature $B(x,y)$ is zero
for some $y$ \cite[p.~100 and p.~252]{Mok}. He called such vectors
``higher characteristic vectors''.

\begin{proof}
The curvature of a holomorphic vector bundle $E$ with hermitian
metric over a complex manifold $X$ can be viewed as a form
$$\Theta_E: E_p\times E_p\times T_pX\times T_pX\arrow \C$$
which is linear in the first and third variables and conjugate
linear in the second and fourth variables \cite[section 7.5]{Zheng}.
It satisfies
$$\Theta_E(y,x,w,z)=\overline{\Theta_E(x,y,z,w)}.$$
Taking the dual bundle changes the sign of the curvature,
in the sense that
$$\Theta_{E}(x,y,z,w) = -\Theta_{E^*}(y^*, x^*, z, w),$$
where $x^*$ and $y^*$ are the dual vectors given by the hermitian form.

We define the curvature $\Theta=\Theta_{TX}$ 
of a hermitian metric on $X$ to be
the curvature of the tangent bundle
as a holomorphic vector bundle with hermitian
metric.
When the metric is K\"ahler, we also have
$$\Theta(x,y,z,w)=\Theta(z,y,x,w).$$

The holomorphic bisectional curvature is defined by
$$B(x,y)=\frac{\Theta(x,x,y,y)}{|x|^2|y|^2}$$
for nonzero vectors $x,y\in T_pX$. By the identities above, the
holomorphic bisectional curvature is real and depends only on the complex
lines $\C x$ and $\C y$. The holomorphic sectional curvature
is
$$H(x)=B(x,x)=\frac{\Theta(x,x,x,x)}{|x|^4}$$
for a nonzero vector $x$ in $T_pX$. This depends only
on the complex line $\C x$.

Let $x$ be a nonzero vector in $T_pX$ which
maximizes the holomorphic sectional curvature. This is possible,
because the holomorphic sectional curvature is $C^{\infty}$
on the complex projective space of lines in $T_pX$. Write
$H(x)=-A<0$. 
With this simple choice, we will show that $B(x,y)\leq -A/2$ for
all nonzero vectors $y$ in $T_pX$. 

We can scale $x$ and $y$ to have length 1.
To first order, for $c\in \C$ near 0, we have
\begin{align*}
H(x+cy)&=\frac{1}{|x+cy|^4}\Theta(x+cy,x+cy,x+cy,x+cy)\\
&= (1-4\Re(\overline{c}\langle x,y\rangle))[H(x)
+4\Re(\overline{c}\Theta(x,x,x,y))] +
O(|c|^2)\\
&= H(x)+4\Re[\overline{c}(-H(x)\langle x,y\rangle +\Theta(x,x,x,y))] +
O(|c|^2),
\end{align*}
using that $|x+cy|^2=1+2\Re(\overline{c}\langle x,y\rangle)+|c|^2$.
(We take the hermitian metric $\langle x,y\rangle$ on $T_pX$
to be linear in $x$ and conjugate linear in $y$.)
Since the holomorphic sectional curvature is maximized at the vector $x$,
the first-order term in $c$ must be zero for all $c\in \C$, and so
$\Theta(x,x,x,y)=H(x)\langle x,y\rangle$.

Next, we compute to second order, for $c\in \C$ near 0.
The identities on curvature imply that $B(x,y)=\Theta(x,x,y,y)
=\Theta(y,x,x,y)=\Theta(y,y,x,x)=\Theta(x,y,y,x)$, and we know
that $\Theta(x,x,x,y)=H(x)\langle x,y\rangle$. Therefore:
\begin{align*}
H(x+cy)&=\frac{1}{|x+cy|^4}\Theta(x+cy,x+cy,x+cy,x+cy)\\
&= [1-4\Re(\overline{c}\langle x,y\rangle)-2|c|^2+12\Re(\overline{c}\langle
x,y\rangle)^2]\\
& \cdot [H(x)+4H(x)\Re(\overline{c}\langle x,y\rangle)+4B(x,y)|c|^2
+2\Re(\overline{c}^2\Theta(x,y,x,y))] + O(|c|^3)\\
&= H(x)-2H(x)|c|^2-4H(x)(\Re \overline{c}\langle x,y\rangle)^2
+4B(x,y)|c|^2+2\Re(\overline{c}^2\Theta(x,y,x,y))+O(|c|^3).
\end{align*}
Since the holomorphic sectional curvature is maximized at $x$,
the quadratic term in $c$ must be $\leq 0$ for all $c\in \C$. 
The term $-4H(x)(\Re \overline{c}\langle x,y\rangle)^2$ is nonnegative,
which works to our advantage. Let $c$ belong to one of the real lines in $\C$
such that $\Re(\overline{c}^2\Theta(x,y,x,y))=0$;
then we must have
$$-2H(x)|c|^2+4B(x,y)|c|^2\leq 0.$$
Therefore, $B(x,y)\leq H(x)/2=-A/2$, as we want.
\end{proof}

\section{More varieties with big cotangent bundle}

\begin{corollary}
\label{biratbig}
Let $X$ be a compact K\"ahler manifold
with nonpositive holomorphic bisectional curvature.
Let $Y$ be a compact K\"ahler manifold with a generically
finite meromorphic map $Y\dashrightarrow X$. Suppose that
the holomorphic sectional curvature of $X$ is negative
at some point in the closure of the image of $Y$.
Then the cotangent bundle of $Y$ is big.
\end{corollary}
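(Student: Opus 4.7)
The plan is to reduce Corollary \ref{biratbig} to Theorem \ref{curv} applied to $X$. First, by Hironaka's elimination of indeterminacy, I may replace $Y$ by a compact K\"ahler manifold $Z$ obtained as a sequence of blow-ups along smooth centers in $Y$, so that the meromorphic map becomes a holomorphic morphism $f:Z\to X$; let $\pi:Z\to Y$ denote the resulting birational morphism. With $n:=\dim Y=\dim X$, the morphism $f$ is generically finite and surjective, so the closure of the image is all of $X$, and the curvature hypothesis is exactly that of Theorem \ref{curv}. Hence $\Omega^1_X$ is nef and big.

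Next I pull bigness back from $X$ to $Z$. Because $f$ is generically finite between smooth manifolds of equal dimension $n$, the natural cotangent map $f^*\Omega^1_X\to\Omega^1_Z$ is a generic isomorphism of rank-$n$ locally free sheaves; since the target is torsion-free this map is injective as a map of sheaves, and passing to symmetric powers gives injections $f^*S^j\Omega^1_X\hookrightarrow S^j\Omega^1_Z$. Taking global sections yields
\[
h^0(Z,S^j\Omega^1_Z)\ \geq\ h^0(Z,f^*S^j\Omega^1_X)\ \geq\ h^0(X,S^j\Omega^1_X),
\]
where the last inequality holds because pullback along the surjection $f$ is injective on sections of locally free sheaves. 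Bigness of $\Omega^1_X$ forces $h^0(X,S^j\Omega^1_X)\geq cj^{2n-1}$ for large $j$, so $\Omega^1_Z$ is big.

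Finally, I descend bigness from $Z$ to $Y$. Since $\pi:Z\to Y$ is a birational morphism between smooth compact manifolds, the image $\pi(E)\subset Y$ of its exceptional locus $E\subset Z$ has codimension at least two in $Y$ (by the standard purity statement for birational proper morphisms to a normal base). Given a global section $\sigma$ of $S^j\Omega^1_Z$, its restriction to $Z\setminus E$ transfers via the isomorphism $\pi\colon Z\setminus E\to Y\setminus\pi(E)$ to a section of $S^j\Omega^1_Y$ on $Y\setminus\pi(E)$, which extends uniquely to all of $Y$ by Hartogs' theorem. The resulting map $H^0(Z,S^j\Omega^1_Z)\to H^0(Y,S^j\Omega^1_Y)$ is injective because $\sigma$ is determined by its restriction to the dense open set $Z\setminus E$. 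Hence $h^0(Y,S^j\Omega^1_Y)\geq h^0(Z,S^j\Omega^1_Z)$, and $\Omega^1_Y$ is big. The principal technical point is this last descent step, and the codimension-two property it uses; the other pieces are immediate from Theorem \ref{curv} and the standard functoriality of symmetric differentials under generically finite morphisms.
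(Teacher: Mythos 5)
Your argument proves the corollary only in the special case where $Y\dashrightarrow X$ is dominant with $\dim Y=\dim X$. The hypothesis is that the map is \emph{generically finite}, meaning generically finite onto its image; the image may be a proper subvariety of $X$ of dimension $\dim Y<\dim X$, which is precisely why the statement requires negative holomorphic sectional curvature ``at some point in the closure of the image of $Y$'' rather than simply at some point of $X$ (and why the corollary is useful for, e.g., resolutions of subvarieties of ball or symmetric-domain quotients). In that general case your reduction to Theorem \ref{curv} on $X$ breaks down at the second step: the cotangent map $f^*\Omega^1_X\to\Omega^1_Z$ is generically \emph{surjective} (because $f$ is generically an immersion) but has a kernel of rank $\dim X-\dim Y$, so it is not generically injective, the induced maps on symmetric powers are not injective, and the inequality $h^0(Z,S^j\Omega^1_Z)\geq h^0(X,S^j\Omega^1_X)$ fails. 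It could not succeed for numerical reasons either: bigness of $\Omega^1_X$ concerns growth of order $j^{2\dim X-1}$, while bigness of $\Omega^1_Z$ concerns growth of order $j^{2\dim Y-1}$, and sections of $S^j\Omega^1_X$ can die upon restriction to the image of $Z$.

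The paper's proof avoids this by replacing $\Omega^1_X$ with a bundle of the correct rank $n=\dim Y$: after a further blow-up, the derivative of $f$ lifts $f$ to a morphism $g$ into the Grassmannian bundle $W\to X$ of $n$-dimensional subspaces of $TX$, and one pulls back the tautological rank-$n$ quotient $E$ of $\Omega^1_X$ on $W$. The induced metric on $g^*E$ has nonnegative curvature because bisectional curvature decreases on complex submanifolds, and Lemmas \ref{segre} and \ref{curvbound} give positivity of the Segr\'e form $s_n((g^*E)^*)$ at a point lying over a point where the holomorphic sectional curvature of $X$ is negative; hence $g^*E$ is nef and big, and the map $g^*E\to\Omega^1_{Y_3}$, an isomorphism on a dense open set, transfers bigness to $\Omega^1_{Y_3}$. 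Your final descent step (bimeromorphic invariance of symmetric differentials via Hartogs across the codimension-two image of the exceptional locus) is correct and is also invoked in the paper, and your argument is complete in the equidimensional dominant case; but as written it does not prove the corollary.
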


\begin{proof}
By resolution of singularities, there is a compact K\"ahler manifold
$Y_2$ with a bimeromorphic morphism $Y_2\arrow Y$
such that the given map $f:Y\dashrightarrow X$ extends
to a morphism $f:Y_2\arrow X$. Since the ring of symmetric differentials
on a compact complex manifold is a bimeromorphic invariant, it suffices
to show that $\Omega^1_{Y_2}$ is big.

Let $W\arrow X$ be the Grassmannian bundle
of subspaces of $TX$ of dimension equal to $n=\dim(Y)$. Since
$f$ is generically finite, 
the derivative of $f$ is injective on an open dense subset $U$ of $Y_2$.
So the derivative
of $f$ gives a meromorphic map $g:Y_2\dashrightarrow W$ lifting $f$,
a morphism over $U$.
Again, there is a compact K\"ahler manifold $Y_3$ with a bimeromorphic
morphism $Y_3\arrow Y_2$, an isomorphism over $U$,
such that $g$ extends to a morphism
$g:Y_3\arrow W$. It suffices to show that $\Omega^1_{Y_3}$ is big.

There is a natural vector bundle $E$ of rank $n$
on the Grassmannian bundle $W$, a quotient
of the pullback of $\Omega^1_X$ to $W$. The bundle $E$
inherits a hermitian metric from $\Omega^1_X$.
Therefore the bundle $g^*E$ on $Y_3$ has a hermitian metric. Moreover,
the restriction of $g^*E$ to $U\subset Y_3$ can be identified 
with $\Omega^1_U$, with the metric pulled back from the metric on
$\Omega^1_X$ via the immersion $f:U\arrow X$.
Since $X$ has nonpositive bisectional curvature, and bisectional
curvature decreases on complex submanifolds
\cite[section 7.5]{Zheng},
the curvature of
$g^*E$ is nonnegative over $U$, hence over all of $X_3$.
Also, by Lemmas \ref{segre} and \ref{curvbound}, the Segr\'e form
$s_n((g^*E)^*)$ is positive at some point of $U$, because
$X$ has negative holomorphic sectional curvature at some point
in the image of $U$, and holomorphic sectional
curvature decreases on complex submanifolds \cite[section 7.5]{Zheng}.
The Segr\'e form may not be positive
on all of $Y_3$, but positivity on $U$ implies
that the number $\int_X s_n((g^*E)^*)$ is positive. Equivalently, the
line bundle $O(1)$ on $\P(g^*E)\arrow Y_3$ has nonnegative curvature
and the number $(c_1O(1))^{2n-1}$ is positive. So $O(1)$ is nef and big
on $\P(g^*E)$. Equivalently, $g^*E$ is nef and big on $Y_3$.

Because we can pull back 1-forms, we have a natural map
$\alpha:f^*\Omega^1_X\arrow \Omega^1_{Y_3}$ of vector bundles on $Y_3$,
which is surjective over $U$. Also, we have a natural surjection
$\beta:f^*\Omega^1_X\arrow g^*E$ over $Y_3$ by definition of $E$.
The map $\alpha$ factors through the surjection $\beta $ over $U$,
hence over all of $Y_3$. That is, we have a map $g^*E\arrow \Omega^1_{Y_3}$
of vector bundles over $Y_3$, and it is an isomorphism over $U$.

The resulting
map $H^0(Y_3,S^j(g^*E))\arrow H^0(Y_3,S^j\Omega^1_{Y_3})$ is injective
for all $j>0$. Since $g^*E$ is big on $Y_3$,
$\Omega^1_{Y_3}$ is big.
\end{proof}

\section{Variations of Hodge structure}

Let $X$ be a compact K\"ahler manifold. Consider
a complex variation of Hodge structure $V$ over $X$, and let
$$\varphi:\X\arrow D$$
be the corresponding period map, where $\X$ is the universal cover
of $X$.

\begin{theorem}
Suppose that the derivative of $\varphi$ is injective
at some point of $X$. Then $\Omega^1_X$ is big.
\end{theorem}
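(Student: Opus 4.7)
The plan is to adapt the Grassmannian bundle construction in the proof of Corollary \ref{biratbig}, using the Griffiths--Schmid curvature of the period domain in place of the curvature hypothesis on $X$. By classical Griffiths--Schmid computations, the horizontal tangent bundle $T^hD\subset TD$, equipped with its Hodge metric, has nonpositive holomorphic bisectional curvature on horizontal pairs, and its holomorphic sectional curvature along horizontal tangents is bounded above by some constant $-c<0$. Because $\varphi:\X\arrow D$ is horizontal and $\pi_1X$-equivariant, the bundle $\varphi^*T^hD$ with its pulled-back Hodge metric descends to a holomorphic hermitian bundle $\mathcal H$ on $X$, and $d\varphi$ descends to the Kodaira--Spencer map $\theta:TX\arrow\mathcal H$, injective on a dense open $U\ni p$. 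In particular, $\mathcal H$ is Griffiths-seminegative on $X$.

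Mirroring Corollary \ref{biratbig}, I would form the Grassmannian bundle $W=\mathrm{Gr}_n(\mathcal H)\arrow X$ of $n$-dimensional subspaces of $\mathcal H$ (where $n=\dim X$) and use $\theta$ to define a meromorphic map $X\dashrightarrow W$, $x\mapsto\mathrm{image}(\theta_x)$, resolving indeterminacies to obtain a morphism $g:Y\arrow W$ along a bimeromorphic modification $\mu:Y\arrow X$. Pulling back the tautological rank-$n$ subbundle $F\subset \pi_W^*\mathcal H$ yields $g^*F\subset \mu^*\mathcal H$, a rank-$n$ subbundle of a Griffiths-seminegative bundle, hence itself Griffiths-seminegative. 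Dually, the rank-$n$ bundle $(g^*F)^*$ on $Y$ has nonnegative Chern curvature.

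The crux is to exhibit a strictly positive curvature direction for $(g^*F)^*$ at some point over $p$. Over $U$ the pullback $h=\varphi^*(\text{Hodge metric})$ is a genuine K\"ahler metric, and by the Gauss equation for complex submanifolds applied to the horizontal immersion $\varphi|_{\widetilde U}\inj D$, $h$ has nonpositive holomorphic bisectional curvature and holomorphic sectional curvature at $p$ bounded above by $-c$. Lemma \ref{curvbound} applied to $(U,h)$ at $p$ then produces $y\in T_pX$ with $B_h(y,v)\leq -c/2$ for every nonzero $v\in T_pX$. Since $(g^*F|_U,\text{Hodge metric})$ is isometric to $(TU,h)$ via $\theta$, this translates into $\Theta_{(g^*F)^*}(\theta(y)^*,\theta(y)^*,v,v)>0$ for all nonzero $v\in T_pX$ at the preimage $\tilde p$ of $p$.

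Lemma \ref{segre} then gives that the Segr\'e form $s_n(g^*F)$ is positive at $\tilde p$ and nonnegative on $Y$, so $(g^*F)^*$ is big on $Y$ by Siu's criterion. Finally, as in the last paragraph of the proof of Corollary \ref{biratbig}, the composition $\mu^*\mathcal H^*\xrightarrow{\mu^*\theta^*}\mu^*\Omega^1_X\arrow \Omega^1_Y$ factors through the quotient $\mu^*\mathcal H^*\surj (g^*F)^*$ (the factorization holds on $U$, hence on all of $Y$), producing a map $(g^*F)^*\arrow \Omega^1_Y$ that is an isomorphism on $U$; bigness of $\Omega^1_Y$ follows, and the bimeromorphic invariance of the ring of symmetric differentials gives bigness of $\Omega^1_X$. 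The most delicate step I foresee is verifying that the Griffiths--Schmid curvature data descends correctly to $\mathcal H$ and that the pullback K\"ahler metric $h$ on $U$ really satisfies the needed curvature inequalities, since $D$ is not K\"ahler in general and one must invoke the Gauss equation carefully for horizontal submanifolds.
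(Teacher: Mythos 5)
Your architecture is the same as the paper's: pull back the Griffiths--Schmid metric, pass to a Grassmannian bundle to extend the degenerate metric across the non-immersive locus, apply Lemmas \ref{segre} and \ref{curvbound} to get a positive Segr\'e form and hence bigness via Siu, and transfer bigness to $\Omega^1$ through the factorization of the pullback map on $1$-forms. The only structural difference is that you phrase everything dually (rank-$n$ subbundles of the horizontal tangent bundle rather than rank-$n$ quotients of $\Omega^1_D$) and descend $\varphi^*T^hD$ to $X$ before forming the Grassmannian, whereas the paper works $\pi_1X$-equivariantly over $D$; these are equivalent.

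There is, however, one step you should repair: the claim that $T^hD$ has nonpositive holomorphic bisectional curvature ``on horizontal pairs,'' and hence that $\mathcal H=\varphi^*T^hD$ is Griffiths-seminegative on all of $X$, is not justified by the Griffiths--Schmid computation. Their Theorem 9.1 controls holomorphic \emph{sectional} curvature in horizontal directions, and the bisectional bounds of Peters apply to pairs of vectors tangent to the image of a period map, where the integrability condition on the horizontal distribution kills the positive contribution to the curvature; for an arbitrary fiber vector $e\in T^h_{\varphi(x)}D$ paired with a base direction $d\varphi(v)$ there is no such bracket condition, so Griffiths-seminegativity of the full bundle $\mathcal H$ does not follow. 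Fortunately you do not need it: the subbundle $g^*F$ restricted to $U$ is exactly $d\varphi(TU)\cong TU$ with the pulled-back metric, which is Griffiths-seminegative by Peters, and since the curvature form of the hermitian bundle $g^*F$ is continuous on $Y$ and $U$ is dense, seminegativity (equivalently, nonnegativity of the curvature of $(g^*F)^*$) extends to all of $Y$. This continuity argument is precisely how the paper handles the same point, and with that substitution your proof goes through.
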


The theorem was inspired by Zuo's result
that $\Omega^1_X$ is weakly positive under these assumptions
\cite[Theorem 0.1]{Zuoneg}.
Note that weak positivity (defined in section \ref{curvsect})
generalizes the notion of ``pseudo-effective''
for line bundles. So Zuo's result is similar, but it
does not show that $H^0(X,S^i\Omega^1_X)$ is nonzero for some $i$.
We repeat that our notion of a big vector bundle is not
the stronger notion ``Viehweg big'' (section \ref{curvsect}). 

We recall that a {\it complex variation of Hodge structure }on
a complex manifold $X$ is a complex local system $V$ with an indefinite
hermitian form and an orthogonal $C^{\infty}$ decomposition
$V=\oplus_{p\in \Z}V^p$ such that the form is $(-1)^p$-definite
on $V^p$, and such that Griffiths transversality holds:
the connection sends $V^p$ into
$$A^{1,0}_X(V^{p-1})\oplus A^1_X(V^p)\oplus A^{0,1}_X(V^{p+1})$$
\cite[section 4]{SimpsonHiggs}. Let $r_p=\dim(V^p)$; then the corresponding
period domain is the complex manifold $D=G/V$ where
$G=U(\sum_{p\text{ odd}} r_p,\sum_{p\text{ even}}r_p)$
and $V=\prod_p U(r_p)$. A complex variation of Hodge structure
with ranks $r_p$ is equivalent to a representation
of $\pi_1X$ into $G$ and a $\pi_1X$-equivariant holomorphic
map $\widetilde{X}\arrow D$ which is horizontal with
respect to a natural distribution in the tangent bundle
of $D$.

\begin{proof}
Griffiths and Schmid defined a $G$-invariant hermitian metric
on a period domain $D=G/V$. The period map $\varphi:\X\arrow D$
is always tangent to the ``horizontal'' subbundle of $TD$.
The holomorphic sectional curvatures of $D$
corresponding to horizontal directions are at most a negative
constant \cite[Theorem 9.1]{GriffithsSchmid}. Pulling back the metric
on $D$ gives a canonical hermitian metric $g$ on the Zariski open subset
$U\subset X$ where $\varphi$ is an immersion. Since holomorphic sectional
curvature decreases on submanifolds, $g$ has negative holomorphic
sectional curvature on $U$.
Peters showed that
$g$ has nonpositive holomorphic bisectional
curvature on $U$ \cite[Corollary 1.8, Lemma 3.1]{Peters}. Finally,
$g$ is a K\"ahler metric on $U$ (even though the metric
on $D$ is only a hermitian metric) \cite[Theorem 1.2]{Lu}.

The metric $g$ may degenerate on $X$, but we can
argue as follows. Let $Y\arrow D$ be the Grassmannian bundle
of subspaces of $TD$ of dimension equal to $n=\dim(X)$. Then
the derivative of $\varphi$ gives a lift of the morphism
$\X\arrow D$ to a $\pi_1X$-equivariant meromorphic
map $f:\X\dashrightarrow Y$
(a morphism over $\widetilde{U}$). Let $\X_2$ be the closure
of the graph of $f$ in $\X\times Y$. We have a $\pi_1X$-equivariant
proper bimeromorphic morphism $\X_2\arrow \X$, and $f$ extends
to a morphism $f:\X_2\arrow Y$. Let $X_2=\X_2/\pi_1X$, which
is a compact analytic space with a proper bimeromorphic morphism
$X_2\arrow X$. Finally, let $X_3\arrow X_2$ be a resolution
of singularities; we can assume that $X_3$ is a compact K\"ahler manifold
since $X$ is a compact K\"ahler manifold. Then $X_3$ is a 
compact K\"ahler manifold with a bimeromorphic morphism $X_3\arrow X$,
and $f:\X\dashrightarrow Y$ extends to a $\pi_1X$-equivariant
morphism
$f:\X_3\arrow Y$.

There is a natural $G$-equivariant vector bundle $E$ of rank
$\dim(X)$ on the Grassmannian bundle $Y$, a quotient
of the pullback of $\Omega^1_D$ to $Y$. The bundle $E$
inherits a hermitian metric from $\Omega^1_D$. 
Therefore the bundle $f^*E$ on $\X_3$ has a hermitian metric. This bundle
is $\pi_1X_3$-equivariant, and we also write $f^*E$ for the corresponding
bundle on $X_3$. 
The restriction of $f^*E$ to $U\subset X_3$ can be identified with $\Omega^1_U$
with the metric induced from the metric on the dual bundle $TU$.
Because
curvature increases for quotient bundles \cite[section 7.5]{Zheng},
the curvature of 
$f^*E$ is nonnegative over $U$, hence over all of $X_3$.
Also, by Lemmas \ref{segre} and \ref{curvbound}, the Segr\'e form
$s_n((f^*E)^*)$ is positive at each point of $U$. It may not be positive
on all of $X_3$, but positivity on $U$ implies
that the number $\int_X s_n((f^*E)^*)$ is positive. Equivalently, the
line bundle $O(1)$ on $\P(f^*E)\arrow X_3$ has nonnegative curvature
and the number $(c_1O(1))^{2n-1}$ is positive. So $O(1)$ is nef and big
on $\P(f^*E)$. Equivalently, $f^*E$ is nef and big on $X_3$.

Because we can pull back 1-forms, we have a natural map
$\alpha:\varphi^*\Omega^1_D\arrow \Omega^1_{X_3}$ of vector bundles on $\X_3$,
which is surjective over $\widetilde{U}$. 
Also, we have a natural surjection
$\beta:\varphi^*\Omega^1_D\arrow f^*E$ over $\widetilde{X_3}$
by definition of $E$.
The map $\alpha$ factors through the surjection $\beta $ over $\widetilde{U}$,
hence over all of $\X_3$. That is, we have a map $g^*E\arrow \Omega^1_{Y_3}$
of vector bundles over $\X_3$, and it is an isomorphism over $U$.
This map is $\pi_1X_3$-equivariant, and so we have a corresponding
map of vector bundles over $X_3$.

The resulting
map $H^0(X_3,S^j(f^*E))\arrow H^0(X_3,S^j\Omega^1_{X_3})$ is injective
for all $j\geq 0$. Since $f^*E$ is big on $X_3$,
$\Omega^1_{X_3}$ is big. Since the ring of symmetric differentials on
a compact complex manifold is a bimeromorphic invariant,
$\Omega^1_X$ is big.
\end{proof}

\begin{corollary}
\label{vhs}
Let $X$ be a compact K\"ahler manifold. Consider
a complex variation of Hodge structure $V$ over $X$ with discrete
monodromy group $\Gamma$, and let
$$\varphi:X\arrow D/\Gamma$$
be the corresponding period map. After replacing $X$ by a finite
\'etale covering $Z$, we can assume that $\Gamma$ is torsion-free.
Let $Y$ be a resolution of singularities of the image
of $\varphi:Z\arrow D/\Gamma$. Then $\Omega^1_Y$ is big.
\end{corollary}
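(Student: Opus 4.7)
The plan is to reduce Corollary \ref{vhs} to the preceding theorem by equipping $Y$ with a complex variation of Hodge structure whose period map is generically immersive. First I would reduce to the case that $\Gamma$ is torsion-free. Since $\Gamma\subset G$ is a finitely generated linear group, Selberg's lemma produces a torsion-free subgroup $\Gamma'\subset \Gamma$ of finite index. Let $Z\arrow X$ be the finite \'etale covering corresponding to the preimage of $\Gamma'$ under $\pi_1 X\arrow \Gamma$. Replacing $X$ by $Z$, I may assume that $\Gamma$ itself is torsion-free; then $\Gamma$ acts freely and properly discontinuously on $D=G/V$ (the isotropy $V$ being compact), and $D/\Gamma$ is a complex manifold.

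Second I would construct a compact K\"ahler resolution $Y$ of the compact analytic subvariety $S:=\varphi(Z)\subset D/\Gamma$. Concretely, starting from any resolution $Y_0\arrow S$, I would lift the meromorphic map $Z\dashrightarrow Y_0$ by forming the closure of its graph in $Z\times Y_0$, resolve the resulting singularities to obtain a compact K\"ahler manifold $\widetilde Z$ mapping to $Y_0$, and take the Stein factorization (or an appropriate further resolution) to obtain $Y$ as a smooth model. In parallel with the construction of $X_3$ in the preceding proof, $Y$ can then be arranged to be a compact K\"ahler manifold equipped with a holomorphic map $\psi:Y\arrow D/\Gamma$ whose image is $S$; since $\dim Y=\dim S$ and $\psi(Y)=S$, the derivative of $\psi$ is injective on a Zariski open subset of $Y$.

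Third I would transfer the VHS structure to $Y$. The period domain $D=G/V$ is simply connected, since in the long exact sequence of the fibration $V\arrow G\arrow D$ the map $\pi_1(V)\arrow \pi_1(G)$ is surjective (via the determinant factors of $V=\prod_p U(r_p)$) and $V$ is connected. Hence $\pi_1(D/\Gamma)=\Gamma$, and $\psi$ induces a representation $\rho:\pi_1 Y\arrow \Gamma\subset G$ together with a $\rho$-equivariant holomorphic lift $\widetilde\psi:\widetilde Y\arrow D$. Horizontality of $\widetilde\psi$ and the pullback of the Hodge decomposition endow $Y$ with a complex variation of Hodge structure whose period map is $\widetilde\psi$, which by the previous paragraph has injective derivative at some point. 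Applying the preceding theorem to $(Y,\widetilde\psi)$ yields that $\Omega^1_Y$ is big.

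The main obstacle is the K\"ahlerness of $Y$: one must be sure that a resolution of the compact analytic image $S$ admitting a surjection from a K\"ahler manifold can itself be taken K\"ahler. Once this is in place---the same point that had to be handled for $X_3$ in the preceding proof---the remainder of the argument is routine bookkeeping about VHS pullbacks and generic immersivity.
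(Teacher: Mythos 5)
Your argument is correct and is exactly the intended deduction: the paper states Corollary \ref{vhs} without writing out a proof, and the route you take---Selberg's lemma to make $\Gamma$ torsion-free, a K\"ahler resolution $Y$ of the image $S=\varphi(Z)$, and the pullback along $Y\arrow D/\Gamma$ of a complex variation of Hodge structure whose period map $\widetilde{Y}\arrow D$ is horizontal and generically immersive---is precisely how the preceding theorem is meant to be applied. Your attention to the K\"ahlerness of $Y$ (via Fujiki's class $\mathcal{C}$ and the bimeromorphic invariance of the ring of symmetric differentials) is the right point to worry about and is handled adequately.
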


For a dominant meromorphic map of compact complex manifolds
$Z\dashrightarrow Y$,
there is a natural pullback map on the ring of symmetric differentials,
$$\oplus_{i\geq 0} H^0(Y,S^i\Omega^1_Y)\arrow \oplus_{i\geq 0}
H^0(Z,S^i\Omega^1_Z),$$
and this is injective. Therefore, Corollary \ref{vhs} implies that
the cotangent dimension $\lambda(X)$ is at least $2\dim(Y)-\dim(X)$,
using that $\lambda(X)=\lambda(Z)$ for a finite \'etale covering
$Z\arrow X$ \cite[Theorem 1]{Sakai}.
In particular,
if $X$ is the base of a variation of Hodge structure
with discrete and infinite monodromy, then the image $Y$
of the period map has positive dimension, and so our lower bound
for $\lambda(X)$ gives that $X$
has a nonzero symmetric differential.

\section{Non-rigid representations}

We use the following result which Arapura proved for smooth
complex projective varieties \cite[Proposition 2.4]{Arapura}. It was
extended to compact K\"ahler manifolds by the second author
\cite[Theorem 1.6(i)]{Klingler}.

\begin{theorem}
\label{nonrigid}
Let $X$ be a compact K\"ahler manifold. Suppose
that $\pi_1X$ has a complex
representation of dimension $n$ which is not rigid.
Then $H^0(X,S^i\Omega^1_X)\neq 0$ for some $1\leq i\leq n$.
\end{theorem}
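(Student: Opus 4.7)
The plan is to argue by contradiction, exploiting Simpson's non-abelian Hodge theory together with the properness of the Hitchin fibration.

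Let $M_B := \mathrm{Hom}(\pi_1 X, GL_n(\C))/\!/ GL_n(\C)$ denote the Betti moduli space of $n$-dimensional semisimple complex representations of $\pi_1 X$. Passing to the semi-simplification if necessary, I may assume $\rho$ is semisimple, so that $[\rho] \in M_B$ and the non-rigidity hypothesis says precisely that $[\rho]$ is not an isolated point of $M_B$. By Simpson's non-abelian Hodge correspondence, extended to compact K\"ahler manifolds as used in \cite{Klingler}, $M_B$ is canonically homeomorphic to the Dolbeault moduli space $M_{\mathrm{Dol}}$ of polystable Higgs bundles $(E, \theta)$ of rank $n$ on $X$ with vanishing rational Chern classes.

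Next, the Hitchin map
$$h \colon M_{\mathrm{Dol}} \longrightarrow \bigoplus_{i=1}^{n} H^0(X, S^i \Omega^1_X), \qquad (E,\theta) \mapsto (\mathrm{tr}\,\theta, \ldots, \det \theta),$$
which sends $(E,\theta)$ to the coefficients of the characteristic polynomial of the Higgs field, is proper. Now assume for contradiction that $H^0(X, S^i \Omega^1_X) = 0$ for every $1 \le i \le n$. Then the target of $h$ is zero, so $h$ is constant and $M_{\mathrm{Dol}}$ is compact, being the fibre of a proper map over a point. Via Simpson's homeomorphism, $M_B$ is compact as well. But $M_B$ is affine: it is the GIT quotient of the affine representation scheme $\mathrm{Hom}(\pi_1 X, GL_n(\C))$, which is well defined since $\pi_1 X$ is finitely generated. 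A compact affine $\C$-scheme is a finite set of points, so $[\rho]$ would be isolated in $M_B$, contradicting non-rigidity. The symmetric differential thereby produced automatically has degree $i$ with $1 \le i \le n$, as required.

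The main obstacle is not the final contradiction but the setting up of the three K\"ahler-generality inputs --- Simpson's correspondence, the construction of $M_{\mathrm{Dol}}$ as a Hausdorff moduli space, and properness of the Hitchin map --- in the compact K\"ahler rather than smooth projective setting. This is exactly the content of the extension \cite{Klingler} cited in the remark preceding the theorem. Once those ingredients are granted, the argument collapses to the short proof by contradiction above.
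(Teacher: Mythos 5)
The paper does not actually prove Theorem \ref{nonrigid}; it quotes it, citing Arapura \cite[Proposition 2.4]{Arapura} for the projective case and the second author \cite[Theorem 1.6(i)]{Klingler} for the extension to compact K\"ahler manifolds. Your argument --- vanishing of all $H^0(X,S^i\Omega^1_X)$ forces the Hitchin base to be a point, so by properness of the Hitchin map $M_{\Dol}$ is compact, hence $M_B$ is a compact affine variety, hence finite, hence every point is isolated --- is essentially Arapura's original proof, and is correct given the three inputs you list. You are also right that in the K\"ahler setting the entire content lies in establishing those inputs (the moduli space $M_{\Dol}$, the homeomorphism with $M_B$, and properness of the Hitchin map), which is precisely what \cite{Klingler} supplies; so your write-up is a faithful reduction to the cited sources rather than an independent proof, matching how the paper itself treats the statement.
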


Here we say that a representation of $\pi_1X$ is rigid if
the corresponding point in the moduli space $M_B(X,GL(n))$
(the ``Betti moduli space'' or ``character variety'')
is isolated. The points of $M_B(X,GL(n))$ are in one-to-one
correspondence with the isomorphism classes of $n$-dimensional
semisimple representations (meaning direct sums of irreducibles)
of $\pi_1X$, with a representation
being sent to its semisimplification \cite[section 7]{SimpsonII}.

We also need the following $p$-adic analogue,
essentially proved by Katzarkov and Zuo
using Gromov-Schoen's construction
of pluriharmonic maps into the Bruhat-Tits building
\cite[proof of Theorem 3.2]{Katzarkov},
\cite[section 1]{ZuoCrelle}, \cite{GromovSchoen}.
An explicit formulation and proof of Theorem \ref{unbounded}
can be found in the second author's \cite[Theorem 1.6(ii)]{Klingler}.

\begin{theorem}
\label{unbounded}
Let $X$ be a compact K\"ahler manifold, and let $K$ be a nonarchimedean
local field. Suppose
that there is a semisimple representation from $\pi_1X$
to $GL(n,K)$ which is unbounded
(equivalently, which is not conjugate to a representation
over the ring of integers of $K$).
Then $H^0(X,S^i\Omega^1_X)\neq 0$ for some $1\leq i\leq n$.
\end{theorem}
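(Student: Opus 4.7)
The plan is to construct the required symmetric differentials from an equivariant pluriharmonic map to the Bruhat-Tits building of $GL(n,K)$, following the strategy of Gromov-Schoen and Katzarkov-Zuo.

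First I would use the hypothesis that $\rho:\pi_1X\arrow GL(n,K)$ is semisimple and unbounded to conclude that the induced isometric action of $\pi_1X$ on the Bruhat-Tits building $\Delta$ of $GL(n,K)$ has no fixed point in $\Delta$ and, by semisimplicity, no fixed parabolic at infinity. This is the standard dichotomy for actions on $CAT(0)$ buildings.

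Next I would apply the Gromov-Schoen existence theorem for equivariant pluriharmonic maps into $CAT(0)$ targets to produce a $\rho$-equivariant pluriharmonic map $u:\widetilde{X}\arrow \Delta$; the absence of a fixed point forces $u$ to be non-constant, and the regularity results of Gromov-Schoen imply that $u$ is smooth on an open set $\widetilde{U}\subset \widetilde{X}$ whose complement has real Hausdorff codimension at least two. The set $\widetilde{U}$ descends to an open subset $U\subset X$.

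Then I would perform the spectral covering construction. On $U$ the map $u$ locally takes values in a single apartment $\R^n\subset \Delta$, so the $(1,0)$-part of $du$ defines, up to the action of the Weyl group $S_n$, an unordered $n$-tuple of holomorphic one-forms $\eta_1,\dots,\eta_n$ on $U$. The elementary symmetric polynomials $\sigma_i(\eta_1,\dots,\eta_n)$ are therefore globally well-defined sections of $S^i\Omega^1_U$ for $1\leq i\leq n$. Because $u$ is locally Lipschitz into a $CAT(0)$ space, these forms are locally bounded; since the complement of $U$ in $X$ has sufficiently large codimension, a standard removable-singularity argument extends them to sections $s_i\in H^0(X,S^i\Omega^1_X)$. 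Finally, since $u$ is non-constant, $\partial u$ does not vanish identically on $U$, so at least one eigenform $\eta_i$ is nonzero at some point, and hence at least one $s_i$ is nonzero.

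I expect the main obstacle to be the spectral covering step: constructing the eigenforms $\eta_i$ in a way compatible with the branching of the spectral cover, verifying that they are genuinely holomorphic on $U$, and proving that the symmetric functions extend holomorphically across the singular locus of $u$ as sections of $S^i\Omega^1_X$ on the full compact K\"ahler manifold. This technical heart of the argument is precisely what Katzarkov and Zuo carry out via harmonic maps to buildings, and which is formulated for compact K\"ahler manifolds by the second author.
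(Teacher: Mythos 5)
Your outline is exactly the argument the paper has in mind: the paper does not prove Theorem \ref{unbounded} itself but attributes it to Katzarkov and Zuo via Gromov--Schoen's equivariant pluriharmonic maps into the Bruhat--Tits building, with the explicit formulation and proof in the second author's paper, and your steps (no fixed point from unboundedness, reduction via parabolics using semisimplicity, pluriharmonic map with codimension-two singular set, Weyl-invariant elementary symmetric functions of the holomorphic eigenforms, removable-singularity extension) are precisely that proof. The only small imprecision is the claim that semisimplicity rules out a fixed point at infinity outright; rather, a semisimple representation fixing a point at infinity lands in a Levi subgroup, and one induct on the corresponding sub-building, but this is the standard reduction and does not change the approach.
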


\section{Representations in positive characteristic}

We now prove Theorem \ref{main} for representations in positive
characteristic, which turns out to be easier. That is, we will
show that if $X$ is a compact K\"ahler manifold such that
$\pi_1X$ has a finite-dimensional infinite-image
representation over some field $k$
of characteristic $p>0$, then
$X$ has a nonzero symmetric differential.

We can assume that the field $k$ is algebraically closed.
By Procesi, for each natural number $n$,
there is an affine scheme $M=M_B(X,GL(n))_{\F_p}$
of finite type over $\F_p$
whose $k$-points are in one-to-one correspondence
with the set of isomorphism classes of semisimple 
$n$-dimensional representations of $\pi_1X$ over $k$
\cite[Theorem 4.1]{Procesi}.
(To construct this scheme, choose a finite presentation for $\pi_1X$.
The space of homomorphisms $\pi_1X\arrow GL(n)$ is a closed subscheme
of $GL(n)^r$ over $\F_p$
in a natural way, where $r$ is the number of generators
for $\pi_1X$. We then take the affine GIT quotient by the conjugation
action of $GL(n)$.)

We want to show that if $X$ has no symmetric differentials, then
every $n$-dimensional representation of $\pi_1X$ over $k$ has finite image.
Suppose that the Betti moduli space $M$
has positive dimension over $\F_p$. Then $M$ contains an affine curve
over $\F_q$ for some power $q$ of $p$. Therefore, after possibly increasing
$q$, there is a point of $M(\F_q((t)))$ which is not
in $M(\F_q[[t]])$. After increasing $q$ again, it follows that
there is a semisimple
representation of $\pi_1(X)$ over $\F_q((t))$ which is not defined
over $\F_q[[t]]$. This contradicts Katzarkov and Zuo's Theorem \ref{unbounded},
since $X$ has no symmetric differentials.
So in fact $M$ has dimension zero over $\F_p$.

It follows that every finite-dimensional
semisimple representation of $\pi_1X$ over $k$
is in fact defined over $\FF$. But every finite-dimensional
representation of a finitely
generated group over $\FF$ has finite image, since the generators all
map to matrices over some finite field. So every finite-dimensional
semisimple representation
of $\pi_1X$ over $k$ has finite image.

Finally, we show that any finite-dimensional representation $\rho$ of $\pi_1X$
over $k$ has finite image. We know that the semisimplification of $\rho$
has finite image. Therefore, a finite-image subgroup $H$ of $\pi_1X$
maps into the subgroup of strictly upper-triangular matrices in $GL(n,k)$.
The latter group is a finite extension of copies of the additive group
over $k$, and so the image of $H$ is a finite extension of abelian groups
killed by $p$. Since $H$ is finitely generated, it follows that the image
of $H$ is finite. Therefore the image of $\pi_1X$ in $GL(n,k)$
is finite, as we want.

\section{Representations in characteristic zero}

\begin{theorem}
\label{char0}
Let $X$ be a compact K\"ahler manifold. Suppose that
there is a finite-dimensional complex representation of $\pi_1X$
with infinite image. Then $X$ has a nonzero symmetric differential.
\end{theorem}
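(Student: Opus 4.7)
The plan is to route each infinite-image representation into one of the three preceding results: Theorem \ref{nonrigid} for non-rigid representations, Theorem \ref{unbounded} for representations unbounded at some nonarchimedean place, and Corollary \ref{vhs} for the remaining rigid representations, for which we will exhibit a variation of Hodge structure with discrete monodromy. First I would reduce to the semisimple case. Given $\rho:\pi_1X\arrow GL(n,\C)$ with infinite image, either the semisimplification $\rho^{ss}$ still has infinite image (replace $\rho$ by $\rho^{ss}$), or else some finite-index subgroup $\Gamma'\subset\pi_1X$ is sent by $\rho$ into the unipotent radical of a parabolic subgroup of $GL(n,\C)$. In the latter case $\rho(\Gamma')$ is an infinite, finitely generated, torsion-free nilpotent group, hence has infinite abelianization; the corresponding finite \'etale cover $X'\arrow X$ then has $h^{1,0}(X')>0$ by Hodge theory, and the norm along $X'\arrow X$ of a nonzero holomorphic 1-form on $X'$ is a nonzero symmetric differential on $X$.

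Assume henceforth that $\rho$ is semisimple with infinite image. If $\rho$ is not rigid, Theorem \ref{nonrigid} finishes the proof. If $\rho$ is rigid, then by Simpson's theory the isomorphism class of $\rho$ is defined over $\overline{\Q}$, so after conjugation $\rho$ takes values in $GL(n,k)$ for some number field $k\subset\C$. I would then inspect the image of $\rho(\pi_1X)$ inside $GL(n,k_v)$ for each nonarchimedean completion $k_v$: if it is unbounded for some $v$, apply Theorem \ref{unbounded} directly.

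The central case is when $\rho(\pi_1X)$ is bounded at every nonarchimedean place, so that after a further conjugation $\rho$ factors through $GL(n,O_k)$. Here I would mobilize all Galois conjugates $\rho^\sigma:\pi_1X\arrow GL(n,\C)$ indexed by the complex embeddings $\sigma:k\inj\C$. Each $\rho^\sigma$ is again rigid and semisimple, so by Simpson's theorem each underlies a complex variation of Hodge structure $V^\sigma$ on $X$. The direct sum $V:=\oplus_\sigma V^\sigma$ is a complex variation of Hodge structure whose monodromy is the diagonal image of $\rho(\pi_1X)$ inside $\prod_\sigma GL(n,\C)$. By Minkowski's theorem, $O_k$ is discrete in $\prod_\sigma\C$, so this monodromy group $\Gamma$ is a discrete subgroup of $GL(nN,\C)$ with $N=[k:\Q]$, and it is infinite because one summand of $V$ is $\rho$ itself. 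Corollary \ref{vhs} now applies: after passing to a finite \'etale cover $Z\arrow X$ to make $\Gamma$ torsion-free, the resolution $Y$ of the image of the period map $Z\arrow D/\Gamma$ is a positive-dimensional compact K\"ahler manifold with $\Omega^1_Y$ big, and $X$ itself inherits a nonzero symmetric differential via the injective pullback along $Z\dashrightarrow Y$ and the norm along $Z\arrow X$.

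The hardest step is the last case: constructing the discrete variation of Hodge structure. This requires simultaneously applying Simpson's existence theorem to every Galois conjugate of $\rho$ and then observing that the Minkowski-style discreteness of $O_k$ inside $\prod_\sigma\C$ is precisely what Corollary \ref{vhs} demands. The earlier reductions---to semisimple $\rho$, to rigid $\rho$, to $\rho$ defined over a number field, and to $\rho$ bounded at all finite places---together with the final descent via norms, are standard and decompose the problem cleanly into the three cases above.
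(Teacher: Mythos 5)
Your proposal is correct and follows essentially the same route as the paper: non-rigid representations are handled by Theorem \ref{nonrigid}, a finite-image semisimplification yields a $1$-form on a finite \'etale cover whose norm is a nonzero symmetric differential, unboundedness at a nonarchimedean place invokes Theorem \ref{unbounded}, and the remaining rigid, everywhere-bounded case is fed into Corollary \ref{vhs} via a variation of Hodge structure with discrete monodromy. The only divergence is in that last step: the paper cites Simpson's Theorem 5 (the rigid semisimple representation is a direct factor of a $\Q$-variation of Hodge structure) together with Bass's theorem to conjugate into $GL(m,\Z)$, whereas you rebuild this by summing the Galois conjugates $V^\sigma$ and using Minkowski discreteness of $O_k$ in $\prod_\sigma \C$ --- which is essentially the proof of Simpson's theorem and works, modulo the small caveat that boundedness at all finite places only produces a stable projective $O_k$-lattice (not necessarily free, so ``factors through $GL(n,O_k)$'' is slightly imprecise), which nonetheless suffices for discreteness of the image in $\prod_\sigma GL(n,\C)$.
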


If a finitely generated
group has an infinite-image representation
over a field of characteristic zero, then it has an infinite-image
representation over $\C$. So Theorem \ref{char0} will complete the proof
of Theorem \ref{main}.

\begin{proof}
By Theorem \ref{nonrigid}, we can assume that the given representation $\rho$
of $\pi_1X$ is rigid. Therefore, the point associated to $\rho$
in $M_B(X,GL(n))$ is fixed by Simpson's $\C^*$ action, which means
that the semisimplification $\sigma$ of $\rho$ can be made
into a complex variation of Hodge structure over $X$
\cite[Corollary 4.2]{SimpsonHiggs}.

Suppose that the representation $\sigma$ has finite image. Then $\rho$ sends
a finite-index subgroup $H$ of $\pi_1X$ into the group $U$ of
strictly upper-triangular matrices in $GL(n,\C)$. 
Since $U$ is nilpotent and $\rho$ has infinite image,
the abelianization of $H$ must be infinite. By Hodge theory,
the finite \'etale covering
$Y\arrow X$ corresponding to $H$ has a nonzero 1-form
$\alpha \in H^0(Y,\Omega^1_Y)$.
If $H$ has index $r$ in $\pi_1X$, the norm of $Y$ is a nonzero element of
$H^0(X,S^r\Omega^1_X)$, as we want.

It remains to consider the case where $\sigma$ is a complex
variation of Hodge structure with infinite image. We cannot
immediately apply Corollary \ref{vhs} because the image of $\sigma$
in $GL(n,\C)$ need not be discrete. At least $\sigma$ is conjugate
to a representation into $GL(n,F)$ for some number field,
because $\sigma $ is rigid. More precisely, $\sigma$ is a complex
direct factor of a $\Q$-variation of Hodge structure $\tau:\pi_1X
\arrow GL(m,\Q)$
\cite[Theorem 5]{SimpsonHiggs}. Let $m$ be the dimension of $\tau$.

For each prime number $p$, consider the representation
$\tau:\pi_1X\arrow GL(m,\Q_p)$. By Katzarkov and Zuo's Theorem \ref{unbounded},
if this representation is not bounded,
then $H^0(X,S^i\Omega^1_X)$ is nonzero for
some $1\leq i\leq m$.

Therefore, we can assume that $\tau$ is $p$-adically bounded for each
prime number $p$. Then $\tau$ is conjugate to a representation
into $GL(m,\Z)$ \cite{Bass}. Thus $\tau$ is a complex variation
of Hodge structure with discrete monodromy group.
By Corollary \ref{vhs}, $X$ has nonzero symmetric
differentials. More precisely, there is a finite \'etale
covering $Z$ of $X$ and a blow-up $Z_2$ of $Z$
such that the given representation of $\pi_1Z_2=\pi_1Z\subset \pi_1X$ factors
through a surjection $Z_2\arrow Y$ with $\Omega^1_Y$ big and $Y$ 
of positive dimension. As a result,
the cotangent dimension $\lambda(X)$ is at least $2\dim(Y)-\dim(X)$.
\end{proof}


\small \sc Institut de Math\'ematiques de Jussieu, Paris, France

brunebarbe@math.jussieu.fr
\medskip

Institut de Math\'ematiques de Jussieu, Paris, France

klingler@math.jussieu.fr
\medskip

DPMMS, Wilberforce Road,
Cambridge CB3 0WB, England

b.totaro@dpmms.cam.ac.uk
\end{document}